\numberwithin{equation}{section}
\numberwithin{equation}{section}
\newtheorem{definition}{Definition}[section]
\newtheorem{theorem}[definition]{Theorem}
\newtheorem{remark}[definition]{Remark}
\newcommand{\R}{{\mathbb R}}
\newcommand{\N}{{\mathbb N}}
\renewcommand{\P}{\mathbb P}
\providecommand{\norm}[1]{\| #1 \|}
\renewcommand{\div}{\mathrm{div}\,}
\newcommand{\n}{\mathrm{n}}
\title[Continuous dependence of Navier-Stokes equations]{A note on continuous dependence of Navier-Stokes equations with oscillating force}
\author[A. Dutta]{Anirban Dutta}
\date{\today}
\address{Queen's University}
\email{21ad53@queensu.ca}
\begin{document}


\keywords{Continuous dependence, Navier-Stokes equations, method of averaging, property of maximal \( L^{p} \)- regularity.}

\begin{abstract}
In this paper, we examine the averaging effect of a highly oscillating external force on the solutions of the Navier-Stokes equations. We show that, as long as the force time-average decays over time, if the frequency and amplitude of the oscillating force grow, then the corresponding solutions to Navier-Stokes equations converge (in a suitable topology) to the solution of the homogeneous equations with same initial data. Our approach involves reformulating the system as an abstract evolution equation in a Banach space, and then proving  continuous dependence of solutions on both initial conditions and external forcing.
\end{abstract}

\maketitle

\maketitle

\section{Introduction}\label{sec1}
In this paper, we show the averaging effect of a highly oscillating external force on 
solutions of a d-dimensional Navier-Stokes equation.
Following Chepyzhov, Pata, and Vishik  \cite{MR2475550}, we consider a bounded domain $\Omega \subset \R^d$, $d=2,3$ with $C^3$ boundary and $T>0$. For each $n\geq1$, consider the following system:
\begin{equation}\label{intro1}
\left\{
\begin{aligned}
   &\nabla\cdot v_n 
   =0\qquad&&\text{in }(0,T)\times\Omega,
   \\
   &\partial_t v_n+ (v_n\cdot \nabla)v_n 
   =\nu\Delta v_n-\nabla P+n^{\frac{\rho}{p}}g(nt,x) \qquad&&\text{in } (0,T)\times \Omega,
   \\
   &v_n=0\qquad&&\text{on }(0,T)\times \partial \Omega, 
\end{aligned}
\right.    
\end{equation}
where $v_n(t,x)$ denotes the fluid velocity at time $t$,  $\nu$ is the coefficient of kinematic viscosity, and $P=P(t,x)$ is the (unknown) pressure. The fluid density is assumed to be $1$. 
 In \eqref{intro1}, $\rho\in[0,1)$ is a fixed scaling parameter, and $p\in(1,\infty)$.
We assume that the function $g:\R^+\times \Omega\to\R^d$ is 
such that 
\begin{equation}\label{intro-g}
    \left(\frac{1}{T}\right)^{1-\rho }\int_0^T \norm{ g(s,x) }^p ds\stackrel{T\to\infty}{\longrightarrow} 0
\end{equation}
 in a suitable functional space. As $n$ grows, the term $n^{\rho/p}g(nt,x)$ represents an external force with increasing frequency and amplitude, while its time average decreases according to equation \eqref{intro-g}. Such averaging helps the solution $v_n$ of \eqref{intro1} converges (in a suitable functional space) to the solution $v$ of the system
\begin{equation}\label{intro2}
\left\{
\begin{aligned}
   &\nabla\cdot v 
   =0\qquad&&\text{in }(0,T)\times\Omega,
   \\
   &\partial_t v+ (v\cdot \nabla)v 
   =\nu\Delta v-\nabla P \qquad&&\text{in } (0,T)\times \Omega,
   \\
   &v=0\qquad&&\text{on }(0,T)\times \partial \Omega, 
\end{aligned}
\right.    
\end{equation}
where the external force is zero. For the sake of exposition, we may assume that the initial conditions in \eqref{intro1} and \eqref{intro2} are the same, although our methods allow to relax this assumption. In \cite{MR2475550}  the authors show that the uniform  global attractor of \eqref{intro1} converges as $n\to\infty$ to that of \eqref{intro2}. In their analysis, all the functional spaces are assumed to be Hilbert  spaces and only the case $p=2$ is considered in \eqref{intro1}.  
 
To establish our result, we rewrite the above equations as ordinary differential equation (ODE) on a (infinite-dimensional) Banach space. We consider the abstract evolution equation
\begin{equation}\label{intro-ODE-n}
\begin{cases}
\displaystyle
\frac{d\phi_n(t)}{dt} + A\phi_n(t) = f_n(t, \phi_n(t)), \qquad & t>t_{0} \\[10pt]  
\displaystyle
\phi_n(t_{0})= u^n_{0}.
\end{cases}
\end{equation}
In \eqref{intro-ODE-n} the operator $A$ is  linear, while  the map $x\mapsto f_n(t,x)$ is a nonlinear time-dependent forcing.
We prove that, in a suitable topology, if $f_n$ and $u^n_{0}$ converge to some $f$ and $u_0$, respectively, as $n\to\infty$, then the solution $\phi_n$ of \eqref{intro-ODE-n}  converges to the solution $\phi$ of the  ODE
\begin{equation}
\begin{cases}
\displaystyle
\frac{d\phi(t)}{dt} + A\phi(t) = f(t, \phi(t)), \qquad & t>t_{0} \\[10pt]  

\displaystyle
\phi(t_{0})= u_{0}.
\end{cases}
\end{equation}
This result can phrased as the continuous dependence of the solution of an abstract ODE upon initial condition and forcing.

Continuous dependence of solutions to abstract ODEs has been studied in various contexts. For example, Henry (\cite[Theorem 3.4.1]{henry}) investigates the case when  $A$ is a sectorial operator, showing the continuous dependence of mild solutions upon both the initial condition and the forcing term. Furthermore, in \cite[Theorem 3.4.9]{henry}, the author explores another concept of continuous dependence, which leads to the ``method of averaging''. For quasilinear parabolic evolution equations, K\"ohne, Pr\"uss, and Wilke (\cite[Theorem 2.1]{MR2643804}) establish continuous dependence solely on the initial conditions, under the assumption that \( A \) possesses the property of maximal \( L^{p} \)- regularity.  As a result, their continuous dependence results apply to strong solutions.\\

In this paper, we will establish two main abstract theorems on continuous dependence (Theorems \ref{main theorem1} and \ref{4.2}).
In both results we consider strong solutions and establish continuous dependence upon both the initial condition and the forcing term, assuming that \( A \) possesses the property of maximal \( L^{p} \)- regularity. In Theorem 
\ref{main theorem1} we consider a more general form of forcing than the one Henry considers in (\cite[Theorem 3.4.1]{henry}) and, unlike \cite{MR2643804},  we consider non-autonomous ODEs. Theorem \ref{4.2} on the "method of averaging" for abstract ODEs yields Theorem \ref{main theorem3} about continuous dependence for Navier-Stokes equations \eqref{intro1}.


The paper is organized as follows.
In Section \ref{sec:notation} we introduce the relevant notations and definitions, and recall some useful fundamental results.
In Section \ref{sec:cont_dep} we first prove Theorem \ref{main theorem1} and discuss how our analysis differs from Henry's approach. Afterwards, we prove Theorem \ref{4.2}. 
Section \ref{application} is dedicated to the proof of Theorem \ref{main theorem3}, which follows from Theorem \ref{4.2}.

\section{Notation and useful results}\label{sec:notation}
Let $\Omega \subseteq \mathbb{R}^d$ be a domain with $d \geq 1$, and let $p \in [1, \infty)$. We denote by $L^p(\Omega)$ the usual Lebesgue space, equipped with the norm  
\[
\|w\|_{p,\Omega} := \left( \int_\Omega |w|^p\, dx \right)^{1/p},
\]
and by $W^{k,p}(\Omega)$ the Sobolev space of functions with weak derivatives up to order $k \in \mathbb{N}$ in $L^p(\Omega)$, endowed with the norm  
\[
\|w\|_{W^{k,p}(\Omega)} := \left( \sum_{|\alpha| \leq k} \|D^\alpha w\|_{p,\Omega}^p \right)^{1/p}.
\]
Here, $|\cdot|$ denotes the Euclidean norm for vector fields, and the same symbol is used for the Frobenius norm in the case of tensor fields. The notation $D^\alpha w$ refers to the weak derivative of order $\alpha$ of the function $w$.

For non-integer values $k \notin \mathbb{N}$, we define the Sobolev space via Besov spaces, i.e., $W^{k,p}(\Omega) := B^k_{pp}(\Omega)$. We recall the following characterization of Besov spaces:
\[
B^s_{qp}(\Omega) = (H^{s_0}_q(\Omega), H^{s_1}_q(\Omega))_{\theta, p},
\]
where the right-hand side denotes the real interpolation between Bessel potential spaces $H^{s_0}_q(\Omega)$ and $H^{s_1}_q(\Omega)$, with $s_0 \neq s_1 \in \mathbb{R}$, $p, q \in [1, \infty)$, $\theta \in (0,1)$, and $s = (1 - \theta)s_0 + \theta s_1$. Moreover, Bessel potential spaces satisfy the complex interpolation identity
\[
H^s_q(\Omega) = [H^{s_0}_q(\Omega), H^{s_1}_q(\Omega)]_\theta.
\]

In the Hilbertian case, we have the identification
\[
W^{s,2}(\Omega) = B^s_{22}(\Omega) = H^s_2(\Omega) =: H^s(\Omega),
\]
as discussed in \cite{bergh}. Finally, for every $s > 1/q$, we define the subspace of functions vanishing on the boundary as
\[
{}_0H^s_q(\Omega) := \{ f \in H^s_q(\Omega) : f = 0 \text{ on } \partial \Omega \}.
\]

Let $(X, \norm{\cdot}_X), (Y, \norm{\cdot}_Y)$ be two Banach spaces. For an interval $I\subset\R$ and $1\le p<\infty$, $L^p(I;X)$ (resp. $W^{1,p}(I;X)$, $k\in \N$) denotes the space of strongly measurable functions $f$ from $I$ to $X$ for which $\left(\int_I \norm{f(t)}^p_X\; d t\right)^{1/p}<\infty$ (resp. $\sum^1_{\ell=0}\left(\int_I \norm{\partial^\ell_t f (t)}^p_X\; d t\right)^{1/p}<\infty$).
We  denote by $L^p_{\mathrm{loc}}(I;X)$ the set of all functions $f:I\to X$ belonging to $L^p(K;X)$ for every compact sub-interval $K\subset I$.
As customary, we use the notation $\norm{f}_{L^p(I;Y)\cap W^{1,p}(I;X)}:=\norm{f}_{L^p(I;Y)}+\norm{f}_{W^{1,p}(I;X)}$. $C(I;X)$ denotes the space of continuous functions $f:I\to X$ 
such that $\norm{f}_{C(I,X)}:=\displaystyle\sup_{t\in I}\norm{f(t)}_X < \infty$.

Let \( A: D(A) \subseteq X \to X \) be a closed, densely defined linear operator. Let \( X_{p} := (X, D(A))_{1-\frac{1}{p}, p} \) be the (real) interpolation space, see e.g. Chapter $1$ in \cite{interpolation_theory}. In this case, one can show that the norm on $X_p$ can be expressed as 
\[
\norm{f}_{X_p} := \inf \left\{ \norm{F}_{L^p(0,\infty;X) \cap W^{1,p}(0,\infty;D(A))} \colon f = F(0) \right\}.
\]
This definition makes $X_p$ the natural choice for the space of initial data for the abstract ODE \eqref{eq:def_max_Lp} we consider below.

\begin{definition} Let $t_0\in\R$, $0<T\leq \infty$, and set $I=[t_0,t_0+T)$. For $p\in(1,\infty)$, we say that $A$ \emph{has the property of maximal $L^p$-regularity on $[t_0,t_0+T)$} if and only if there exists a constant $C=C(T)>0$ such that for every $f\in L^p(I; X)$ and every $u_0\in X_p$ there exists a unique $u \in  L^{p}(I;D(A)) \cap W^{1,p}(I;X)$ satisfying the non-homogeneous abstract ODE
\begin{equation} \label{eq:def_max_Lp}
\begin{cases}
\displaystyle
\frac{du}{dt} + Au = f(t), \qquad & t_0\le t\le t_0+T, \\[5pt]  
\displaystyle
u(t_0)=u_0 
\end{cases}
\end{equation}
for almost every $t\in I$ and such that 
\begin{align*}
     \norm{u}_{W^{1,p}(I; X) \cap L^{p}(I; D(A))} \leqslant C(T)\!\left(\norm{ u_0}_{X_p} +\norm{f}_{L^{p}(I;X)}\right).
\end{align*}     
     
\end{definition}

Note that the above constant $C$ can be chosen as a non-decreasing function of $T$. 

\begin{remark}The operator 
    \( A \) has the property of maximal \( L^p \)-regularity \textit{only if} $A$ is a sectorial operator, see \cite[Proposition 3.5.2]{Prussbook}. However, the converse does not generally hold and  
    a counterexample is provided by Coulhon and Lamberton in \cite[Theorem 2.1]{Merdy}. When \( X \) is a Hilbert space,
one can show that  $A$ is sectorial if and only if A has the property of maximal \( L^p \)-regularity, see \cite[Theorem 3.5.7]{Prussbook}.

\end{remark}


Throughout the paper, we write ``$\lesssim\,$'' to denote ``$\leq C$\,'' for some uniform constant $C$ (independent of the various parameters in the problem being considered) that may vary from line to line.
\section{Main abstract theorems on continuous dependence for strong solutions}\label{sec:cont_dep}
We are now ready to present new results on the continuous dependence of strong solutions on both the initial conditions and the forcing term.

\begin{theorem}\label{main theorem1}
  Let $p> 1$. Suppose that $A$ has the property of maximal $L^{p}$-regularity. For every $n\geq0$, consider a function $f_n:[0,\infty)\times X_p\to X$ such that
 \begin{enumerate}[label=\rm{(I.\alph*)},ref=\rm{(I.\alph*)}]
       \item \label{existence_reason1} 
      $f_n(\cdot, 0) \in L^{p}_{\mathrm{loc}}([0,\infty); X)$,
      \item \label{existence_reason2} 
       for all \( R > 0 \), there exists a function \( \Psi^n_R(t) \in L^p_{\mathrm{loc}}([0,\infty)) \) such that for any \( u_1, u_2 \in X_p \) with \( \| u_1 \|_{X_p}, \| u_2 \|_{X_p} \leq R \), the following inequality holds:$$
    \| f_n(t, u_1) - f_n(t, u_2) \|_X \leq \Psi^n_R(t) \| u_1 - u_2 \|_{X_p}.$$

  \end{enumerate}
  Suppose that the sequence $(f_n)_{n\geq1}$ converges to $f_0$ in $X$ uniformly on bounded subsets of $[0,\infty)\times X_p$, that is, 
  \begin{enumerate}[label=\rm{(II.\alph*)},ref=\rm{(II.\alph*)}]
      \item for every $\varepsilon>0$, every compact sub-interval $K\subset [0,\infty)$, and every $\eta>0$, there exists $N\geq1$ such that 
      \[
      \displaystyle\sup_{t\in K}\sup_{\norm{u}_{X_p}\leq \eta}\norm{f_n(t,u)-f_0(t,u)}_X<\varepsilon\]
      for every $n\geq N$.
  \end{enumerate}
  Let $(u_0^n)_{n\geq 0}\subset X_p$ be such that 
  \begin{enumerate}[label=\rm{(III.\alph*)},ref=\rm{(III.\alph*)}]
      \item $\norm{u_0^n-u_0^0}_{X_p}\to0$ as $n\to\infty$,
  \end{enumerate}
   and for each $n\geq0$, let $\phi_{n}$ be the 
   solution of 
\begin{equation}
\begin{cases}
\displaystyle
\frac{d\phi_{n}(t)}{dt} + A\phi_{n}(t) = f_{n}(t, \phi_{n}(t)), \qquad & t>t_{0} \\[10pt]  
\phi_{n}(t_{0})= u^{n}_{0}.  
\end{cases}
\end{equation}
Suppose that $\phi_n$ is defined on  the maximal interval of existence $[t_0,t_0+T_n)$. Then $T_n\in (0,\infty]$,    
\begin{align}\label{T_0leq_limsup}
T_{0} \leq \limsup_{n\to\infty}{T_n}, 
\end{align}
and the sequence $(\phi_n)_{n\geq1}$ converges to $\phi_0$ in the following sense: for every compact sub-interval $K\subset [t_0, t_0+T_0)$ we have
\begin{itemize}
    \item[(a)]  $\displaystyle\sup_{t\in K}  \norm{ \phi_{n}(t)- \phi_{0}(t)}_{X_{p}} \longrightarrow 0$ as $n\to\infty$, and
    \item[(b)]  $\displaystyle\norm{ \phi_{n}- \phi_{0}} _{W^{1,p}(K; X) \cap L^{p}(K; D(A))}\longrightarrow 0$ as $n\to\infty$.
\end{itemize}
\end{theorem}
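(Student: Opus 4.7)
The plan is to establish, for every $T \in (0, T_0)$, that $T_n > T$ for all $n$ sufficiently large and that $\phi_n \to \phi_0$ in the senses (a)--(b) on $K := [t_0, t_0 + T]$; the statements for an arbitrary compact $K \subset [t_0, t_0+T_0)$ and inequality \eqref{T_0leq_limsup} then follow by letting $T \uparrow T_0$. The backbone is a contraction-type a priori estimate on short subintervals, obtained from the maximal $L^p$-regularity property, combined with a continuity argument keeping $\norm{\phi_n(t)}_{X_p}$ inside a fixed ball. Using the standard continuous embedding $W^{1,p}(K; X) \cap L^p(K; D(A)) \hookrightarrow C(K; X_p)$ (automatic for operators with maximal $L^p$-regularity), $\phi_0$ is bounded on $K$, so I fix $R > \sup_{t \in K}\norm{\phi_0(t)}_{X_p}$. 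Since $\Psi^0_R \in L^p_{\mathrm{loc}}([0,\infty))$ by \ref{existence_reason2}, absolute continuity of the integral supplies an $h \in (0, T]$ such that $\norm{\Psi^0_R}_{L^p(J)}$ is as small as I wish for every subinterval $J \subset K$ with $|J| \leq h$.

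Set $w_n := \phi_n - \phi_0$. On any subinterval $J = [\sigma, \sigma + h] \subset K$ on which $\norm{\phi_n(t)}_{X_p} \leq R$ for all $t \in J$, $w_n$ satisfies a linear inhomogeneous equation driven by
\[
G_n(t) := \bigl[f_0(t, \phi_n(t)) - f_0(t, \phi_0(t))\bigr] + \bigl[f_n(t, \phi_n(t)) - f_0(t, \phi_n(t))\bigr]
\]
with initial value $w_n(\sigma)$. Applying maximal $L^p$-regularity on $J$, bounding the first bracket in $L^p(J;X)$ via \ref{existence_reason2} for $f_0$ by $\norm{\Psi^0_R}_{L^p(J)} \norm{w_n}_{C(J; X_p)}$, and the second bracket by $h^{1/p} \varepsilon_n$ with $\varepsilon_n \to 0$ supplied by the uniform convergence hypothesis (II.a) on $K \times \{\norm{\cdot}_{X_p} \leq R\}$, I get
\[
\norm{w_n}_{W^{1,p}(J; X) \cap L^p(J; D(A))} \leq C\bigl[\norm{w_n(\sigma)}_{X_p} + \norm{\Psi^0_R}_{L^p(J)} \norm{w_n}_{C(J; X_p)} + h^{1/p} \varepsilon_n\bigr].
\]
Controlling $\norm{w_n}_{C(J; X_p)}$ by the left-hand side via the maximal regularity embedding and choosing $h$ small enough to absorb the Lipschitz contribution yields the key short-interval estimate
\[
\norm{w_n}_{W^{1,p}(J; X) \cap L^p(J; D(A))} + \norm{w_n}_{C(J; X_p)} \leq C'\bigl(\norm{w_n(\sigma)}_{X_p} + \varepsilon_n\bigr).
\]

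The bootstrap closes the loop. By (III.a), $\norm{w_n(t_0)}_{X_p} \to 0$, so for $n$ large $\norm{\phi_n(t_0)}_{X_p} < R$, and by continuity the exit time $\tau_n := \inf\{t \in [t_0, t_0+T_n) : \norm{\phi_n(t)}_{X_p} > R\}$ satisfies $\tau_n > t_0$. Partitioning $K$ into $N = \lceil T/h \rceil$ subintervals of length at most $h$ and iterating the short-interval estimate up to $\tau_n \wedge (t_0 + T)$, I obtain inductively
\[
\norm{w_n(t_0+jh)}_{X_p} \leq (C')^j \norm{w_n(t_0)}_{X_p} + \bigl(1 + C' + \dots + (C')^{j-1}\bigr) \varepsilon_n \longrightarrow 0,
\]
uniformly in $j \leq N$. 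This excludes $\tau_n \leq t_0 + T$ for $n$ large, so $\norm{\phi_n(t)}_{X_p} \leq R$ throughout $K \cap [t_0, t_0+T_n)$; combined with the standard blow-up alternative for maximal solutions under maximal $L^p$-regularity with locally Lipschitz forcing, this a priori boundedness forces $T_n > T$. Summing the short-interval bounds over the subintervals then delivers both (a) and (b), and letting $T \uparrow T_0$ yields \eqref{T_0leq_limsup}. The principal obstacle is precisely the circularity between the a priori estimate (which presupposes $\phi_n$ in a fixed ball) and the convergence (which is needed to keep $\phi_n$ in that ball); the short-interval absorption is what breaks this circularity, by making each iterative step depend only on the previous endpoint and a vanishing quantity.
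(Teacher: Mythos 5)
Your proposal is correct and follows essentially the same strategy as the paper's proof: a maximal-regularity estimate for the difference $\phi_n-\phi_0$, the embedding into $C(\cdot;X_p)$, absorption of the Lipschitz term by restricting to subintervals on which $\|\Psi^0_R\|_{L^p}$ is small, and a finite iteration combined with a continuity/exit-time argument to keep $\phi_n$ in a fixed ball and extend the existence interval. The only differences are cosmetic (the paper works in the ball of radius $2R$ and phrases the bootstrap via the stopping time $t_n$ rather than an explicit geometric recursion).
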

\begin{proof}
 \ref{existence_reason1} and \ref{existence_reason2} ensure that for all $n \in \N \cup\{0\}$,  we have $T_n > 0$, see \cite[Theorem 3.1.]{Bari}. Letting $T'_0 < T_0$, there exists $R> 0$ such that $\norm{\phi_0(t)}_{X_p} \leq R$, for all $t \in [t_0,t_0+T'_0]$. 
We claim that for large enough $n\in \N$ and for all $t \in [t_0,t_0+T'_0]$, we have the bound $$ \norm{\phi_n(t)}_{X_p} \leq 2R. $$ 
Define
\begin{align}
        t_{n} :=  \sup \{t\in (0, T_n) \colon t \leq T'_0, \norm{\phi_n(\tau)}_{X_p} \leq 2R, \mbox{ for all }\tau \in [0,t]\},\label{def-t_n}
\end{align}
and to prove our claim,  we show that 
$t_n = T'_0 $. 

Since $A$ has the property of maximal $L^{p}$- regularity  on $[t_0,t_0+T'_0]$,  we have that, for all $t_n' \leq t_n (\leq T'_0)$,
\begin{equation}\label{Maximal_regularity}
    \begin{aligned}
    &\norm{ \phi_{n} - \phi_{0} }_{W^{1,p}(t_0, t_0+t'_n}; X) \cap L^{p}(t_0, t_0+t'_n; D(A))\\
    & \lesssim  \norm{ u^{n}_{0}- u^{0}_{0}}_{X_{p}} + \norm{ f_{n}(\cdot, \phi_{n}(\cdot)) - f_0(\cdot, \phi_{0}(\cdot))}_{L^{p}(t_0, t_0+t'_n}; X)\\
    &\lesssim  \norm{ u^{n}_{0}- u^{0}_{0}}_{X_{p}} +\Delta_{n} {t'_n}^\frac{1}{p}+ \norm{ \Psi^0_2R}(\phi_{n} - \phi_{0})_{L^{p}(t_0, t_0+{t'_n}; X_p)}
    \end{aligned}
\end{equation}
where $\Delta_{n} := \sup \{\norm{ f_{n}(s, \phi_{n}(s)) - f_{0}(s, \phi_{n}(s))} _{X} \colon t_0\leq s \leq t_0+{t'_n, \norm{x}_{X_p} \leq 2R}  \}$. We know that the following inequality holds (see \cite[Corollary 1.14.(ii)]{interpolation_theory}):
\begin{align}\label{embedding}
    \sup_{t \in [t_0, t_0+t_n']} \norm{ \phi_{n}(t) - \phi_{0}(t) }_{X_p} \lesssim \norm{ u^{n}_{0}- u^{0}_{0}}_{X_{p}}+\norm{ \phi_{n} - \phi_{0} }_{W^{1,p}(t_0,t_0+{t'_n}; X) \cap L^{p}(t_0,t_0+{t'_n}; D(A))},
\end{align}
Therefore, by combining \eqref{Maximal_regularity} and \eqref{embedding}, we get the the following estimate: 
\begin{align}\label{Combination_of_maximal_and_embedding}
    \sup_{t \in [t_0, t_0+t_n']} \norm{ \phi_{n}(t) - \phi_{0}(t) }_{X_p} \lesssim \norm{ u^{n}_{0}- u^{0}_{0}}_{X_{p}} +\Delta_{n} {t'_n}^\frac{1}{p}+ \norm{ \Psi^0_{2R}(\phi_{n} - \phi_{0})}_{L^{p}(t_0, t_0+{t'_n}; X_p)}
\end{align}
Since $\Psi^0_{2R} \in L^p(t_0, t_0 + T'_0)$, there exists a constant $0 < \delta \leq {T'_0}$ such that for all $a, b \in (t_0, t_0 + T'_0)$ with $|a - b| < \delta$, one has
\begin{align}\label{constant}
    \|\Psi^0_{2R}\|_{L^p(a,b)} \leq \frac{1}{2c},
\end{align}
where $c>0$ is the implied constant of \eqref{Combination_of_maximal_and_embedding}.  
Now, from \eqref{Combination_of_maximal_and_embedding} and \eqref{constant}, we obtain the following estimate for all $t \in [t_0, t_0 + t_n']$:
\begin{equation}\label{9}
    \begin{aligned}
       \|\phi_n(t) - \phi_0(t)\|_{X_p} \lesssim \|u_0^n - u_0^0\|_{X_p} + \Delta_n {T'_0}^{\frac{1}{p}}.
    \end{aligned}
\end{equation}
Therefore, { for all $0 <\epsilon < \frac{R}{2}$, there exists $N \in \N$ such that for all $n \geq N$ we have $\norm{ \phi_{n}(t) - \phi_{0}(t) }_{X_p} < \epsilon$, whenever $t \in [t_0,t_0+\delta)$. So, we ensure that $\delta < T_n$ and therefore, we have $t_n > \delta$. Now, we }proceed with the same analysis for large enough $n$ and  by considering the initial values to be \( \phi_{n}(t_0 + \delta) \) and \( \phi_{0}(t_0 + \delta) \) to show that $t_n > 2\delta$.  Now, by induction, we  show that, for large enough $n\in \N$, we have $t_n = T_0'$. Therefore, for large enough $n\in \N$, the estimate \eqref{9} holds on  \([t_0, t_0+T'_0] \), and 
\begin{align*}
    \sup_{t\in[t_0,t_0+T'_0]} \norm{ \phi_{n}(t) - \phi_{0}(t) }_{X_p} \rightarrow 0\mbox{ as $n \rightarrow \infty$}.
\end{align*}
Furthermore, it is clear from \eqref{Maximal_regularity} that 
\begin{align*}
    \norm{ \phi_{n} - \phi_{0} }_{W^{1,p}(t_0,t_0+T'_0; X) \cap L^{p}(t_0,t_0+T'_0; D(A))} \rightarrow 0 \mbox{ as $n \rightarrow \infty$.}
\end{align*}
{ From the previous analysis, we ensure that for all \( k \in \mathbb{N} \), there exists \( n_k \in \mathbb{N} \) such that \( T_0 - \frac{1}{k} \leq t_{n_k}\leq T_{n_k} \). Therefore, we obtain the inequality \eqref{T_0leq_limsup}
}
\end{proof}
\begin{remark}
     One could compare the above theorem with \cite[Theorem 3.4.1]{henry}. Following the proof of the latter, one would need that 
    $\Psi^{0}_{R}\in L^{p'}_{\mathrm{loc}}([0,\infty))$ with $p' > p$.
\end{remark}
Now, we will prove  our second main result.
\begin{theorem}\label{4.2}
    Let $p> 1$. Suppose that A has the property of maximal $L^{p}$- regularity.  For every  $n\geq0$, consider a function $f_n:[0,\infty)\times X_p\to X$ such that
    \begin{enumerate}[label=\rm{(I.\alph*)},ref=\rm{(I.\alph*)}]
        \item \label{existence_reason1a} $f_0(t,x) = f(x)$, where $f:\; X_p\to X$ is a Lipschitz function { on  bounded subsets of $X_p$}; 
        \item \label{existence_reason2b} For $n\ge 1$,
        \[
        f_n(t,x)= f(x) + n^{\rho/p}g(nt),
        \]
        where 
        $\rho \in (0,1)$,  and $g \in L^p_{\mathrm{loc}}(0,\infty; X)$ satisfies the following property that 
        {for all $T>0$, \[
        \left(\frac{1}{nT}\right)^{1-\rho }\int_{nt_0}^{nt_0+nT} \norm{ g(s) }_X ^p ds \rightarrow 0\quad n \rightarrow \infty.
        \]}
    \end{enumerate}
     Let $(u_0^n)_{n\geq 0}\subset X_p$ be such that 
  \begin{enumerate}[label=\rm{(II.\alph*)},ref=\rm{(II.\alph*)}]
      \item $\norm{u_0^n-u_0^0}_{X_p}\to0$ as $n\to\infty$,
  \end{enumerate}
  and for $n\geq0$, let $\phi_{n}$ be the 
   solution of 
\begin{equation}\label{important}
\begin{cases}
\displaystyle
\frac{d\phi_{n}(t)}{dt} + A\phi_{n}(t) = f_{n}(t, \phi_{n}(t)), \qquad & t>t_{0} \\[10pt]  
\phi_{n}(t_{0})= u^{n}_{0}.  
\end{cases}
\end{equation}
{ Suppose that $\phi_n$ is defined on  the maximal interval of existence $[t_0,t_0+T_n)$. Then $T_n\in (0,\infty]$,    
\begin{align}\label{T_0_for_2nd_thm_leq_limsup}
T_{0} {\leq} \limsup_{n\to\infty}{T_n}, 
\end{align}} 
and the sequence $(\phi_n)_{n\geq1}$ converges to $\phi_0$ in the following sense: for every compact sub-interval $K\subset [t_0, t_0+T_0)$ we have
\begin{itemize}
    \item[(a)]  $\displaystyle\sup_{t\in K}  \norm{ \phi_{n}(t)- \phi_{0}(t)}_{X_{p}} \longrightarrow 0$ as $n\to\infty$, and
    \item[(b)]  $\displaystyle\norm{ \phi_{n}- \phi_{0}} _{W^{1,p}(K; X) \cap L^{p}(K; D(A))}\longrightarrow 0$ as $n\to\infty$.
\end{itemize}
\end{theorem}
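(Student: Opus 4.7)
The plan is to mimic closely the continuation-and-bootstrap argument in the proof of Theorem~\ref{main theorem1}; however, Theorem~\ref{main theorem1} itself cannot be directly invoked. The difficulty is that the perturbation $f_n(t,x)-f_0(t,x)=n^{\rho/p}g(nt)$ does not satisfy the uniform-convergence hypothesis~(II.a) of Theorem~\ref{main theorem1}: the quantity $n^{\rho/p}\|g(nt)\|_X$ need not decay pointwise and can in fact blow up. The averaging effect is only visible after integrating in time, so the step in the proof of Theorem~\ref{main theorem1} that produces the contribution $\Delta_n{t_n'}^{1/p}$ must be rewritten in terms of the $L^p$-norm of the oscillating forcing.

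First I would verify that each $T_n>0$ by appealing to \cite[Theorem~3.1]{Bari}: since $f$ is locally Lipschitz on $X_p$ by~\ref{existence_reason1a} and $g\in L^p_{\mathrm{loc}}(0,\infty;X)$ by~\ref{existence_reason2b}, the hypotheses of that local existence result hold with a time-independent $\Psi^n_R=L_R$. Fixing $T_0'<T_0$ and choosing $R$ with $\sup_{[t_0,t_0+T_0']}\|\phi_0\|_{X_p}\le R$, let $L$ be the Lipschitz constant of $f$ on $\{x\in X_p:\|x\|_{X_p}\le 2R\}$, and define $t_n$ exactly as in~\eqref{def-t_n}. Applying maximal $L^p$-regularity to $\phi_n-\phi_0$, which solves
\[
(\phi_n-\phi_0)'+A(\phi_n-\phi_0)=\bigl[f(\phi_n)-f(\phi_0)\bigr]+n^{\rho/p}g(n\cdot),
\]
on $I_{t_n'}:=[t_0,t_0+t_n']$ with $t_n'\le t_n$, yields
\[
\|\phi_n-\phi_0\|_{W^{1,p}(I_{t_n'};X)\cap L^p(I_{t_n'};D(A))}\lesssim \|u_0^n-u_0^0\|_{X_p}+L\|\phi_n-\phi_0\|_{L^p(I_{t_n'};X_p)}+\|n^{\rho/p}g(n\cdot)\|_{L^p(I_{t_n'};X)}.
\]
The new key ingredient is the scaling identity
\[
\|n^{\rho/p}g(n\cdot)\|_{L^p([t_0,t_0+T_0'];X)}^p=(T_0')^{1-\rho}\Bigl(\frac{1}{nT_0'}\Bigr)^{1-\rho}\int_{nt_0}^{nt_0+nT_0'}\|g(\tau)\|_X^p\,d\tau,
\]
which tends to zero as $n\to\infty$ by~\ref{existence_reason2b}, and which dominates the corresponding $L^p$-norm on any subinterval of $[t_0,t_0+T_0']$.

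From this point the argument runs in parallel to that of Theorem~\ref{main theorem1}: combining the above estimate with the embedding~\eqref{embedding} and choosing $\delta>0$ such that $cL\delta^{1/p}<\tfrac12$ (where $c$ is the implicit constant of the maximal regularity bound) absorbs the Lipschitz term into the left-hand side on each subinterval of length at most $\delta$; iterating across at most $\lceil T_0'/\delta\rceil$ consecutive steps, resetting the initial datum at each endpoint, and using that both $\|u_0^n-u_0^0\|_{X_p}$ and the scaling norm above vanish as $n\to\infty$, I would conclude that for large $n$, $\sup_{[t_0,t_0+T_0']}\|\phi_n-\phi_0\|_{X_p}<R/2$. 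This in turn forces $t_n=T_0'$, hence $T_n>T_0'$; letting $T_0'\uparrow T_0$ delivers~\eqref{T_0_for_2nd_thm_leq_limsup}, while claim~(b) follows directly from the maximal regularity estimate on $[t_0,t_0+T_0']$. The main obstacle is conceptual rather than technical: one must resist treating $n^{\rho/p}g(nt)$ with the $L^\infty$-style $\Delta_n$ of Theorem~\ref{main theorem1} and instead feed the $L^p$-averaging assumption~\ref{existence_reason2b} straight into the maximal regularity inequality via the scaling identity. A convenient side-effect of $f$ being time-independent is that the small-interval size $\delta$ can be chosen independently of $n$, which is precisely what makes the finite-step iteration cover $[t_0,t_0+T_0']$.
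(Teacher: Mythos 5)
Your proposal is correct and follows essentially the same route as the paper's proof: local existence via \cite[Theorem 3.1]{Bari}, a continuation argument on the set where $\|\phi_n\|_{X_p}\le 2R$, the maximal $L^p$-regularity estimate for $\phi_n-\phi_0$ combined with the trace embedding, and, crucially, the same change-of-variables identity converting $\|n^{\rho/p}g(n\cdot)\|_{L^p([t_0,t_0+T_0'];X)}^p$ into the averaged quantity of hypothesis \ref{existence_reason2b}, which tends to zero. The only divergence is in how the Lipschitz term is absorbed: you reuse the small-interval iteration of Theorem \ref{main theorem1} (choosing $\delta$ with $cL\delta^{1/p}<\tfrac12$ and stepping across $\lceil T_0'/\delta\rceil$ subintervals, which works here precisely because $f$ is time-independent), whereas the paper raises the estimate to the $p$-th power and applies Gronwall's inequality in one shot; both closures are valid and yield the same conclusions (a), (b) and \eqref{T_0_for_2nd_thm_leq_limsup}.
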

\begin{proof}
{ \ref{existence_reason1a} and \ref{existence_reason2b} ensure that for all $n \in \N \cup\{0\}$, $T_n > 0$, see \cite[Theorem 3.1.]{Bari}. Letting $T'_0 < T_0$, there exists $R> 0$ such that the inequality $\norm{\phi_0(t)}_{X_p} \leq R$ holds for all $t \in [t_0,t_0+T'_0]$. We define 
\begin{align}
        T'_{n} :=  \sup \{t\in (0, T_n) \colon t \leq T'_0, \norm{\phi_n(\tau)}_{X_p} \leq 2R, \mbox{ for all }\tau \in [0,t]\}.\label{def-T_n}
\end{align}
}
Let us show that $T_n'=T_0'$.
Since $A$ has the property of maximal $L^{p}$- regularity on $[t_0,t_0+T_0']$ and by the properties of $(f_n)_{n\ge 0}$, we get
\begin{equation}\label{main_inrquality}
    \begin{aligned}
    \mbox{ \quad}\norm{ \phi_{n} - \phi_{0} }&_{W^{1,p}(t_0, t_0+T'_{n}; X) \cap L^{p}(t_0, t_0+T'_{n}; D(A))} \\
    &\lesssim  \norm{ u^{n}_{0}- u^{0}_{0}}_{X_{p}} 
    + \norm{ f( \phi_{n}(\cdot)) - f(\phi_{0}(\cdot))}_{L^{p}(t_0, t_0+T'_{n}; X)} \\
    &\quad +\norm{ n^{\rho/p}g(nt)}_{L^{p}(t_0, t_0+T'_{n}; X)}\\
    &\lesssim  \norm{ u^{n}_{0}- u^{0}_{0}}_{X_{p}} + \norm{ \phi_{n} - \phi_{0}}_{L^{p}(t_0, t_0+T'_{n}; X_p)} \\
    &\quad + \norm{ n^{\rho/p}g(nt)}_{L^{p}(t_0, t_0+T'_{n}; X)}.
    \end{aligned}
\end{equation}
{ We know that the following estimate holds
\begin{align}\label{embedding1}
    \norm{ \phi_{n}(t) - \phi_{0}(t) }_{X_p} \lesssim \norm{ u^{n}_{0}- u^{0}_{0}}_{X_{p}}+\norm{ \phi_{n} - \phi_{0} }_{W^{1,p}(t_0,t_0+T'_n; X) \cap L^{p}(t_0,t_0+T'_n; D(A))},
\end{align}} for all $t \in [t_0, t_0+T'_n]$, see \cite[Corollary 1.14.(ii)]{interpolation_theory}. { Therefore, combining} { \eqref{embedding1} and \eqref{main_inrquality}}, we obtain the following estimate{
\begin{equation}
    \begin{aligned}
        \norm{ \phi_{n}(t) - \phi_{0}(t) }_{X_p}\lesssim  \norm{ u^{n}_{0}- u^{0}_{0}}_{X_{p}} + \norm{ \phi_{n} - \phi_{0}}_{L^{p}(t_0, t_0+T'_{n}; X_p)} + \norm{ n^{\rho/p}g(nt)}_{L^{p}(t_0, t_0+T'_{n}; X)}
    \end{aligned}
\end{equation}
for all $t \in [t_0, t_0+T'_n]$. Since for all \( a, b \geq 0 \) and \( p > 1 \), the inequality 
$(a + b)^p \leq 2^{p-1}(a^p + b^p)$ holds, we deduce 
\[
\begin{aligned}
    \| \phi_n(t) - \phi_0(t) \|_{X_p}^p \lesssim & \, \| u_0^n - u_0^0 \|_{X_p}^p + \| \phi_n - \phi_0 \|_{L^p(t_0, t_0 + T'_n; X_p)}^p
     \, + \norm{n^{\rho/p} g(nt) }_{L^p(t_0, t_0 + T'_n; X)}^p
\end{aligned}
\]
for all \( t \in [t_0, t_0+T'_n] \).
Note that the map $\norm{ \phi_{n}(\cdot) - \phi_{0}(\cdot) }^p_{X_p}$ in the previous inequality defines a continuous function on \( [t_0, t_0+ T_n'] \). By Gronwall's inequality,  we conclude that, for all \( t \in [t_0, t_0+ T_n'] \),  the following estimate holds 
\begin{align}
        \norm{ \phi_{n}(t) - \phi_{0}(t) }_{X_p}^p &\lesssim  \norm{ u^{n}_{0}- u^{0}_{0}}^p_{X_{p}}+ \norm{ n^{\rho/p}g(nt)}^p_{L^{p}(t_0, t_0+T'_{n}; X)}\\
        &\lesssim \norm{ u^{n}_{0}- u^{0}_{0}}^p_{X_{p}}+ \norm{ n^{\rho/p}g(nt)}^p_{L^{p}(t_0, t_0+T'_{0}; X)}.\label{after_Gronwall}
\end{align}}
{Given that for all $T>0$, we have that
\[
\left( \frac{1}{nT} \right)^{1 - \rho} \int_{nt_0}^{nt_0+nT} \| g(s) \|_X^p \, ds \to 0 \quad \text{and} \quad \| u_0^n - u_0^0 \|_{X_p} \to 0 \text{ as } n \to \infty,
\]
and utilizing the continuity of \( \phi_n \), we deduce from \eqref{after_Gronwall} that \( T_n \geq T_n' = T_0' \). 

From \eqref{after_Gronwall} we also obtain 
\[
\sup_{[t_0, t_0 + T_0']}\| \phi_n(t) - \phi_0(t) \|_{X_p} \to 0 \quad \text{as} \quad n \to \infty
\]
}
and it is clear from \eqref{main_inrquality} that 
\begin{align*}
    \norm{ \phi_{n} - \phi_{0} }_{W^{1,p}(t_0,t_0+T'_0; X) \cap L^{p}(t_0,t_0+T'_0; D(A))} \rightarrow 0\quad \text{as $n \rightarrow \infty$}.
\end{align*}
{ { From the previous analysis, we can ensure that for all \( k \in \mathbb{N} \), there exists \( n_k \in \mathbb{N} \) such that \( T_0 - \frac{1}{k} \leq T_{n_k} \). Therefore, we obtain the inequality \eqref{T_0_for_2nd_thm_leq_limsup}
}}
\end{proof}

\section{ Navier-Stokes equations with singularly oscillating forces} \label{application}
In this section, we will apply Theorem \eqref{4.2} to prove  our third main result concerning the Navier-Stokes equations with singularly oscillating  external forces.\\

Let $\rho \in [0,1)$ be a fixed parameter, $p \in (1, \infty)$, and $\Omega \subset \R^d$, $d=2,3$, be a bounded domain with boundary $\partial \Omega $ of class $C^{3}$. We consider the three-dimensional Navier-Stokes equations with the no-slip boundary condition
{\begin{equation}\label{eq:motion}
\left\{
\begin{aligned}
   &\nabla\cdot v_n 
   =0\qquad&&\text{in }(0,T)\times\Omega,
   \\
   &\partial_t v_n+ (v_n\cdot \nabla)v_n 
   =\nu\Delta v_n-\nabla P+n^{\frac{\rho}{p}}g(nt,x) \qquad&&\text{in } (0,T)\times \Omega,
   \\
   &v_n=0\qquad&&\text{on }(0,T)\times \partial \Omega, 
\end{aligned}
\right.    
\end{equation}}
Our next objective is to rewrite \eqref{eq:motion} as the evolution equation \eqref{important} on a Banach space $X$. Set 
\[
L_\sigma^{p}(\Omega):=\{v\in L^{p}(\Omega):\; \div v=0\quad\text{in }\Omega,\ v\cdot \n=0\quad\text{on }\partial \Omega\}, 
\]
where the divergence condition holds in the sense of distributions, while the boundary condition holds in the {sense of weak derivatives}. Here, $\n$ denotes the unit, outward normal to $\Omega$. We then define
\[
X:=L^{p}(\Omega),
\]
with norm 
\[
\norm{u}_0:=\norm{u}_{L^{p}(\Omega)}. 
\]
When ${p}=2$, $X$ is a Hilbert space with the inner product
\[
\langle u_1,u_2\rangle:=\int_\Omega\, u_1\cdot u_2\; dx,
\]
and associated norm $\norm{u}_0$ defined above (with ${p}=2$). We also introduce the Banach space 
\[
X_1:=H^2_{p}(\Omega)\cap {_0}H^1_{p}(\Omega)\cap L_{\sigma}^{p}(\Omega), 
\]
and the the operators: 
\begin{equation}\label{eq:operators}
\begin{split}
& A:\; u\in D(A):=X_1\mapsto  Au:=-\nu\P\Delta \in X,
\\
& f_n:\; u\in D(f_n)\subset (0,\infty)\times X
\mapsto   f_n(t,u):=\P\left(-(v_n\cdot \nabla)v_n +n^{\rho/p}g(nt,x)\right) \in X,
\end{split}
\end{equation}
where $\P$ denotes the Helmholtz projection of $L^{p}(\Omega)$ onto $L_\sigma^{p}(\Omega)$.\\

 We assume that the external force $g \in L^p_{\mathrm{loc}}(0,\infty; X)$ satisfies 
\[
\left(\frac{1}{T}\right)^{1-\rho }\int_0^T \norm{g(s)}^p_{X} ds \rightarrow 0,\qquad\text{  as $T \rightarrow \infty$}.
\]
We denote the space of such forces by $ L^p_{\text{avr}}(0, \infty; X) $. The final main result of our paper reads as follows. 
\begin{theorem}\label{main theorem3}
    Let $p > \frac{5}{2}$, and consider a solution $v\in W^{1,p}(0,{T}; X) \cap L^{p}(0,{T}; D(A))$ of the homogeneous Navier-Stokes equations: 
    \begin{equation}
\left\{
\begin{aligned}
   &\nabla\cdot v 
   =0\qquad&&\text{in }(0,T)\times\Omega,
   \\
   &\partial_t v+ (v\cdot \nabla)v 
   =\nu\Delta v-\nabla P \qquad&&\text{in } (0,T)\times \Omega,
   \\
   &v=0\qquad&&\text{on }(0,T)\times \partial \Omega, 
\end{aligned}
\right.    
\end{equation}
with $v(0) \in X_p$. Then, for all $g \in$ \(L^p_{\mathrm{avr}}(0, \infty; X) \) and $v_n(0)=v(0)$, the sequence $(v_n)_{n\ge 1}$ of solutions of \eqref{eq:motion} satisfies 
\[
\sup_{0\leq t\leq {T'}}\|v(t)-v_n(t)\|_{X_p}\to 0 \quad\text{as }n\to\infty
\]
for all ${T'<T}$.

Moreover, $(v_n)_{n\ge 1}$ converges to $v$ also in $W^{1,p}(0,{T'}; X) \cap L^{p}(0,{T'}; D(A))$, for all ${T'<T}$.
\end{theorem}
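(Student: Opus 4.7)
The plan is to verify that the Navier--Stokes system \eqref{eq:motion} fits into the abstract framework of Theorem~\ref{4.2} with $X$, $A$, $X_p$ and $f_n$ as in \eqref{eq:operators}, and then invoke that theorem directly. I would write $f_n(t,u) = f(u) + n^{\rho/p}\tilde g(nt)$ with $f(u) := -\P((u\cdot\nabla)u)$ and $\tilde g(s) := \P g(s,\cdot)$, so the work splits into three items: (i) $A = -\nu\P\Delta$ has maximal $L^p$-regularity on $X$; (ii) $f$ is Lipschitz on bounded subsets of $X_p$ --- this is hypothesis \ref{existence_reason1a} of Theorem~\ref{4.2} --- and (iii) $\tilde g$ satisfies the averaging estimate required by \ref{existence_reason2b}.

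For (i) I would quote the classical fact that the Stokes operator on a bounded $C^3$ domain has maximal $L^p$-regularity for every $p \in (1,\infty)$; this also identifies $X_p$, up to equivalent norms, with a Besov-type subspace of $W^{2 - 2/p, p}(\Omega) \cap L^p_\sigma(\Omega)$ encoding the no-slip condition on $\partial\Omega$. Item~(ii) is the only analytically nontrivial step, and it is precisely where the assumption $p > \tfrac52$ enters: for $d \le 3$ the condition $p > (d+2)/2$ (satisfied by $p>5/2$ in both $d=2$ and $d=3$) is equivalent to $2-2/p > d/p$, which yields the Sobolev embedding $X_p \hookrightarrow L^\infty(\Omega)$; combined with the trivial embedding $X_p \hookrightarrow W^{1,p}(\Omega)$ and the boundedness of $\P$ on $L^p$, I would estimate, for $\|u_i\|_{X_p} \le R$,
\begin{align*}
\|f(u_1) - f(u_2)\|_X &\lesssim \|(u_1 - u_2)\cdot\nabla u_1\|_{L^p(\Omega)} + \|u_2\cdot\nabla(u_1 - u_2)\|_{L^p(\Omega)} \\
&\lesssim \|u_1 - u_2\|_{L^\infty(\Omega)}\|\nabla u_1\|_{L^p(\Omega)} + \|u_2\|_{L^\infty(\Omega)}\|\nabla(u_1 - u_2)\|_{L^p(\Omega)} \\
&\lesssim R\,\|u_1 - u_2\|_{X_p},
\end{align*}
which delivers the required Lipschitz bound on bounded sets.

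Item~(iii) is essentially bookkeeping: since $\P$ is bounded on $L^p(\Omega)$, the averaging property $g \in L^p_{\mathrm{avr}}$ transfers to $\tilde g$, and setting $T_n := nT$ gives
\[
\Bigl(\tfrac{1}{nT}\Bigr)^{1-\rho}\!\int_0^{nT}\|\tilde g(s)\|_X^p\,ds
\;=\;\Bigl(\tfrac{1}{T_n}\Bigr)^{1-\rho}\!\int_0^{T_n}\|\tilde g(s)\|_X^p\,ds
\;\xrightarrow[n\to\infty]{}\;0,
\]
which is exactly the estimate in \ref{existence_reason2b} with $t_0=0$. Finally, the hypothesis $v \in W^{1,p}(0,T;X) \cap L^p(0,T;D(A))$ means that $v$ coincides with the strong solution $\phi_0$ of the associated homogeneous abstract ODE on $[0,T)$, so the maximal existence time $T_0$ in Theorem~\ref{4.2} satisfies $T_0 \ge T$. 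Conclusions~(a) and~(b) of Theorem~\ref{4.2} applied on any compact sub-interval $K = [0,T'] \subset [0,T_0)$ with $T'<T$ then give the two advertised convergence statements. The main obstacle is really step~(ii): once $p > 5/2$ is exploited to push $X_p$ into $L^\infty$, every remaining hypothesis reduces to quoting a known fact or a one-line calculation.
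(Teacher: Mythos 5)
Your proposal is correct and follows essentially the same route as the paper: both reduce Theorem \ref{main theorem3} to Theorem \ref{4.2} by checking maximal $L^p$-regularity of the Stokes operator, local Lipschitz continuity of $u\mapsto -\P((u\cdot\nabla)u)$ on $X_p\subset W^{2-2/p,p}(\Omega)$ (which is where $p>5/2$ enters, your condition $2-2/p>d/p$ being exactly the paper's $s>\max\{1,3/p\}$ for $d=3$), and the transfer of the averaging property to the projected force. You simply spell out the bilinear estimate via the embedding $X_p\hookrightarrow L^\infty(\Omega)$ and the change of variables $s=nt$, details the paper leaves implicit.
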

\begin{proof}

From \cite[Theorem 3.9.]{MR3916775},  it can be deduced that for all \( p > 1 \), \( A \) possesses the property of maximal \( L^p \)-regularity. In addition, the mapping \( v \mapsto(v\cdot \nabla)v: W^{s,p}(\Omega) \rightarrow L^p(\Omega) \)  is defined and bilinear whenever $s > \max\{1,\frac{3}{p}\}$. If we choose $p>\frac{5}{2}$, then we have the mapping \( v \mapsto(v\cdot \nabla)v: X_{p} \subset W^{2-\frac{2}{p},p}(\Omega) \rightarrow L^p(\Omega) \)  is locally Lipschitz. The proof of the theorem is then an immediate consequence of Theorem \eqref{4.2}.
\end{proof}

\section{Acknowledgments}
 The author wishes to express sincere gratitude to Prof. Francesco Cellarosi and Prof. Giusy Mazzone for their unwavering support, expert guidance, and continuous encouragement throughout the course of this project. Their insightful feedback and valuable suggestions have been instrumental in shaping the direction and quality of this work.

Heartfelt thanks are also extended to Prof. Simonett for the enlightening discussions and intellectual engagement that greatly enriched this research.

The author gratefully acknowledges the support of the Natural Sciences and Engineering Research Council of Canada (NSERC).


\bibliographystyle{plain}
\bibliography{Cellarosi_Dutta_Mazzone-Spectral_Stability}

\end{document}